\documentclass[11pt,reqno]{amsart}

\usepackage[T1]{fontenc}
\usepackage[a4paper,margin=2.8cm]{geometry}

\usepackage{amsmath,amssymb,amsfonts,amsthm,mathtools,mathrsfs}
\usepackage{enumitem}
\usepackage{xcolor}

\usepackage{silence}
\usepackage{microtype}

\usepackage[hidelinks,bookmarksdepth=3]{hyperref}
\usepackage[nameinlink,noabbrev]{cleveref}

\usepackage[
  style=alphabetic,
  backend=biber,
  maxbibnames=99,
  maxcitenames=99
]{biblatex}

\AtBeginDocument{
  \setlength{\abovedisplayskip}{9pt plus 2pt minus 3pt}
  \setlength{\belowdisplayskip}{9pt plus 2pt minus 3pt}
  \setlength{\abovedisplayshortskip}{6pt plus 2pt minus 2pt}
  \setlength{\belowdisplayshortskip}{6pt plus 2pt minus 2pt}
  \setlength{\jot}{4pt}
}

\numberwithin{equation}{section}

\newcommand{\eps}{\varepsilon}

\newcommand{\E}{\mathbb E}
\newcommand{\1}{\mathbf 1}

\DeclarePairedDelimiter{\abs}{\lvert}{\rvert}

\theoremstyle{plain}
\newtheorem{thm}{Theorem}[section]
\newtheorem{cor}[thm]{Corollary}
\newtheorem{lem}[thm]{Lemma}

\theoremstyle{remark}

\title[A unified abstract regularity lemma]{A unified abstract regularity lemma}

\author[G. Carenini]{Gaia Carenini}
\address{Trinity College Cambridge, Department of Pure Mathematics and Mathematical Statistics, Centre for Mathematical Sciences, Wilberforce Road, Cambridge CB3 0WA, United Kingdom}
\email{gc645@cam.ac.uk}

\author[L. Franchi]{Leonardo Franchi}
\address{Department of Pure Mathematics and Mathematical Statistics, Centre for Mathematical Sciences, Wilberforce Road, Cambridge CB3 0WA, United Kingdom}
\email{lf511@cam.ac.uk}

\begin{document}
\begin{abstract}
The goal of this short note is to prove a unified abstract regularity lemma which recovers Szemerédi's graph
regularity lemma, Green's arithmetic regularity lemma, and a regularity lemma for Boolean functions as direct corollaries.
\end{abstract}
\maketitle
\vspace{-0.75cm}
\tableofcontents 
\vspace{-0.75cm}
\section{Introduction}

Regularity lemmas are among the basic structural tools of modern discrete
mathematics.  They assert, in different settings, that an arbitrary object can
be decomposed into a bounded-complexity structured part and a pseudorandom
remainder.  The paradigmatic example is Szemerédi's graph regularity lemma
\cite{Szemeredi1978}; closely related regularity statements appear in additive
combinatorics, most notably Green's arithmetic regularity lemma
\cite{Green2005}, and in the analysis of Boolean functions, for instance in
the regularity lemma used by Minzer \cite{minzer2021analysisbooleanfunctionslec1112} to recover the result of Dinur and Friedgut asserting that an intersecting family lies in a junta \cite{DinurFriedgut2009}. These regularity lemmas have found a remarkably broad range of applications across discrete mathematics, playing a central role in developments in graph theory, additive combinatorics, theoretical computer science, and the analysis of Boolean functions.

A useful distinction is between weak and strong regularity lemmas. 
Weak
regularity lemmas, such as the Frieze--Kannan lemma, admit a rather general
abstract formulation; one such formulation is due to Trevisan, Tulsiani and
Vadhan \cite{trevisan2009regularity}. They typically yield quantitatively
much stronger bounds at the expense of providing a correspondingly weaker
structural description. For strong regularity lemmas, the situation is less uniform.  There
are abstract approaches, including the probabilistic framework of Tao \cite{Tao2006} and the
semiring framework of Bollobás and Nikiforov \cite{BollobasNikiforov2007}, but these do not give, in a
completely direct way, all of the standard examples one would like to recover,
especially the Fourier-analytic form of Green's arithmetic regularity lemma.

The purpose of this short note is to isolate an elementary abstract statement from
which the most standard strong regularity lemmas follow with essentially no additional work. This
also clarifies why the same energy-increment proof is operating in graph, Fourier-analytic, and Boolean-function settings. For completeness, in an appendix, we also revisit the frameworks of Tao and of Bollobás--Nikiforov, showing how they can be recovered from our abstract formulation.

\medskip
\textbf{Organization.} The remainder of this note is organized as follows. In Section~\ref{sec: abs-reg-lem}, we state and prove our unified abstract regularity lemma (Theorem~\ref{thm:abstract-regularity}). In Section~\ref{sec:green}, we derive Green's arithmetic regularity lemma from Theorem~\ref{thm:abstract-regularity}. In Section~\ref{sec: Szemeredi}, we derive Szemerédi's graph regularity lemma from the same abstract theorem. In Section~\ref{sec: Dinur-Friedgut}, we obtain the aforementioned regularity lemma for Boolean functions. In the appendix, we show how our framework recovers the abstract regularity lemmas of Bollobás and Nikiforov, and of Tao.\\

\textbf{Acknowledgments.}
We are grateful to our supervisors, Imre Leader and Timothy Gowers, for their support and guidance; the first author is supported by the CB European PhD Studentship funded by Trinity College, Cambridge, and the second author acknowledges support from the Isaac Newton Trust through a Trinity Cambridge Research Studentship.

\section{The abstract regularity lemma }\label{sec: abs-reg-lem}
Let $(X,\mathcal A,\mu)$ be a normalized finitely additive measure space, meaning that $X$ is a set, $\mathcal A$ is an algebra of subsets of $X$, and $\mu:\mathcal A\to[0,1]$ is a finitely additive function satisfying $\mu(X)=1$. Given a set $P\in \mathcal{A}$ with $\mu(P)>0$, we denote the normalized average of a bounded, $\mathcal{A}$-measurable function $h:X\rightarrow\mathbb{C}$ over $P$ by
$$
\E_P[h]=\frac{1}{\mu(P)}\int_P h\,d\mu.
$$ 

In what follows, all partitions considered are finite partitions into measurable sets of positive measure, meaning a partition $\mathcal{P}$ of $X$ is a finite collection of disjoint sets $\{P_1, \dots, P_n\} \subset \mathcal{A}$ such that $\bigcup_{i=1}^n P_i = X$ and $\mu(P_i) > 0$ for all $i \in \{1, \dots, n\}$. Let $\mathcal{P}$ and $\mathcal{Q}$ be two partitions of $X$. We say that $\mathcal{Q}$ \textit{refines} $\mathcal{P}$ if, for every $Q \in \mathcal{Q}$, there exists some $P \in \mathcal{P}$ such that $Q \subseteq P$. Given a part $P \in \mathcal{P}$, the partition of $P$ induced by $\mathcal{Q}$, denoted by $\mathcal{Q}|_P$, is defined as the collection of sets $\mathcal{Q}|_P = \{Q \in \mathcal{Q} \mid Q \subseteq P\}$.

For every part $P \in \mathcal{P}$ of a partition $\mathcal{P}$, fix a non-empty family of measurable functions $\mathcal{F}_P \subseteq \{f : P \to \mathbb{C} \mid \|f\|_\infty \le 1\}$. Given an $\mathcal{A}$-measurable function $g: X \to [0, 1]$, we say that $\mathcal{P}$ is $\varepsilon$-\textit{regular} for $g$ with respect to the families $(\mathcal{F}_P)_{P\in\mathcal P}$ whenever the following inequality holds
$$
\sum_{P \in \mathcal{P}} \mu(P) \sup_{f \in \mathcal{F}_P} \left| \mathbb{E}_P \left[ (g|_P - \mathbb{E}_P[g|_P]) \cdot f \right] \right| \le \varepsilon,
$$
where $g|_P$ represents the restriction of $g$ to the part $P$. For notational simplicity, we define the irregularity of $g$ on $P$ as
$$ \operatorname{irr}_g(P) = \sup_{f \in \mathcal{F}_P} \left| \mathbb{E}_P \left[ (g|_P - \mathbb{E}_P[g|_P]) \cdot f \right] \right|.$$
   
For every positive integer $M$, we choose a subclass of the partitions of $X$ with at most $M$ parts. Its elements will be called admissible partitions, and the subclass will be denoted by $\mathsf{Part}_M(X)$. A \emph{refinement scheme} consists of a growth function $\Phi : \mathbb{N} \to \mathbb{N}$ and a refinement function $\psi$ that, given a partition $\mathcal{P} \in \mathsf{Part}_M(X)$ and a family of witnesses $\{f_P\}_{P \in \mathcal{P}}$ with $f_P \in \mathcal{F}_P$ for each $P \in \mathcal{P}$, returns a refinement $\mathcal{P}' = \psi(\mathcal{P}, \{f_P\}_{P \in \mathcal{P}}) \in \mathsf{Part}_{\Phi(M)}(X)$ of $\mathcal{P}$. 

In this work, we assume that the chosen witnesses become measurable with respect to the refinement, i.e., if $\mathcal{P}' = \psi(\mathcal{P}, \{f_P\}_{P \in \mathcal{P}})$, then for each $P \in \mathcal{P}$, the function $f_P$ is constant on every part of the restricted partition $\mathcal{P}'|_P$.

This is the distinctive feature of our framework. By isolating the measurability of the chosen witnesses with respect to the refinement as the essential hypothesis, one obtains an abstract regularity lemma that is at once simple, yet general.

With this notation in hand, we can state the unified abstract regularity lemma. 

\begin{thm}[Unified abstract regularity lemma]\label{thm:abstract-regularity}
Let $(X, \mathcal{A}, \mu)$ be a normalized finitely additive measure space. Let $g: X \to [0, 1]$ be an $\mathcal{A}$-measurable function, let $\varepsilon > 0$, and let $\mathcal{P}_0 \in \mathsf{Part}_{M_0}(X)$. Then there exists an integer $t \le \lceil \varepsilon^{-2} \rceil$ and a partition $\mathcal{P}_t \in \mathsf{Part}_{\Phi^{\circ t}(M_0)}(X)$ which is $\varepsilon$-regular for $g$, where $\Phi^{\circ t}$ denotes the $t$-fold composition of $\Phi$ with itself.
\end{thm}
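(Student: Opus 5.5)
We will run the standard energy-increment argument. For a partition $\mathcal P$ define its energy
\[
\mathcal E(\mathcal P)=\sum_{P\in\mathcal P}\mu(P)\,\abs{\E_P[g]}^2 ,
\]
which lies in $[0,1]$ because $0\le g\le 1$. Starting from $\mathcal P_0$, we build a sequence inductively. If $\mathcal P_i$ is $\eps$-regular for $g$, we stop. Otherwise, for each $P\in\mathcal P_i$ we choose a witness $f_P\in\mathcal F_P$ with
\[
\abs{\E_P[(g-\E_P g)f_P]}\ge \operatorname{irr}_g(P)-\delta,
\]
where $\delta>0$ is small, and we set $\mathcal P_{i+1}=\psi(\mathcal P_i,\{f_P\})$. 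The supremum need not be attained, which is why we take near-optimal witnesses. By induction, $\mathcal P_i\in\mathsf{Part}_{\Phi^{\circ i}(M_0)}(X)$.

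The key step is to show that $\mathcal E(\mathcal P_{i+1})-\mathcal E(\mathcal P_i)\ge \eps^2$ (up to the $\delta$ slack), whenever $\mathcal P_i$ is not $\eps$-regular.

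Fix $P\in\mathcal P_i$ and write $\mathcal Q=\mathcal P_{i+1}|_P$. Since $\mathcal P_{i+1}$ refines $\mathcal P_i$ and all parts have positive measure, $\mathcal Q$ is a finite partition of $P$ up to the usual additivity. Let $h=g-\E_P g$ on $P$, and let $\bar h$ be the function taking the value $\E_Q[h]$ on each $Q\in\mathcal Q$. Finite additivity gives the Pythagorean identity
\[
\sum_{Q\in\mathcal Q}\mu(Q)\abs{\E_Q g}^2-\mu(P)\abs{\E_P g}^2=\sum_{Q}\mu(Q)\abs{\E_Q h}^2 .
\]
This holds because $\sum_Q\mu(Q)\E_Q h=0$; the check uses only finitely many sums and no countable additivity.

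Now use the measurability hypothesis: $f_P$ is constant, say equal to $c_Q$ with $\abs{c_Q}\le1$, on each $Q$. Then
\[
\mu(P)\,\E_P[hf_P]=\sum_Q\mu(Q)\,c_Q\,\E_Q h .
\]
By Cauchy--Schwarz,
\[
\mu(P)^2\abs{\E_P[hf_P]}^2\le \mu(P)\sum_Q\mu(Q)\abs{\E_Qh}^2 .
\]
So the energy gain on $P$ is at least $\mu(P)\abs{\E_P[hf_P]}^2$. A small point: integrating a constant times $h$ over $Q$ requires integration of bounded measurable functions with respect to a finitely additive measure, which we take as given from the setup.

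Summing over $P$ and applying Cauchy--Schwarz once more (using $\sum_P\mu(P)=1$), we get
\[
\mathcal E(\mathcal P_{i+1})-\mathcal E(\mathcal P_i)\ge\Bigl(\sum_P\mu(P)\abs{\E_P[hf_P]}\Bigr)^2\ge(\eps'-\delta)^2,
\]
where $\eps'>\eps$ is the irregularity sum of $\mathcal P_i$. Since $\eps'>\eps$ strictly, we may choose $\delta\le \eps'-\eps$, and the increment is then at least $\eps^2$.

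Since the energy lies in $[0,1]$ and increases by at least $\eps^2$ at every non-terminal step, the process must stop after $t\le \eps^{-2}$ steps. Hence $t\le\lceil\eps^{-2}\rceil$, and the final partition $\mathcal P_t\in\mathsf{Part}_{\Phi^{\circ t}(M_0)}(X)$ is $\eps$-regular.

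The main obstacle is the increment step: one must check that the energy identity and the constancy of $f_P$ on parts of $\mathcal P_{i+1}|_P$ are enough, using only finite additivity, to convert correlation with $f_P$ into an $L^2$ gain. The rest is bookkeeping, including the non-attainment of the supremum, which the strict inequality $\eps'>\eps$ absorbs.
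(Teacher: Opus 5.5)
Your proof is correct and follows the paper's argument essentially line for line. It uses the same energy $\sum_P\mu(P)(\E_P g)^2$, the same Pythagorean identity plus Cauchy--Schwarz on each part via constancy of $f_P$ on $\mathcal P_{i+1}|_P$, and the same final Cauchy--Schwarz over parts. Your explicit slack $\delta\le\eps'-\eps$ just spells out the paper's remark that finiteness of $\mathcal P_s$ lets one choose witnesses whose weighted sum still exceeds $\eps$; this gives increments $\ge\eps^2$ rather than $>\eps^2$, which still yields $t\le\lceil\eps^{-2}\rceil$.
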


\begin{proof}
For a partition $\mathcal{P}$, define the energy of $g$ by
$$
\mathcal{E}_g(\mathcal{P}) = \|\mathbb{E}[g \mid \mathcal{P}]\|_{L^2(\mu)}^2 = \sum_{P \in \mathcal{P}} \mu(P) \bigl(\mathbb{E}_P[g|_P]\bigr)^2.
$$

The energy always lies between $0$ and $1$, since $0 \le \mathcal{E}_g(\mathcal{P}) \le \mathbb{E}_\mu[g^2] \le 1$. We first record the energy increment estimate.

\smallskip
\noindent\emph{Claim.} Let $\mathcal{Q}$ be a partition that refines $\mathcal{P}$. Suppose that, for each $P \in \mathcal{P}$, the function $f_P : P \to \mathbb{C}$ satisfies $\|f_P\|_\infty \le 1$ and is constant on every part of $\mathcal{Q}|_P$. Then
$$
\mathcal{E}_g(\mathcal{Q}) - \mathcal{E}_g(\mathcal{P}) \ge \sum_{P \in \mathcal{P}} \mu(P) \left| \mathbb{E}_P \left[ (g|_P - \mathbb{E}_P[g|_P]) \cdot f_P \right] \right|^2.
$$

\smallskip
\noindent\emph{Proof of the claim.} Since $\mathcal{Q}$ refines $\mathcal{P}$, we can write
$$
\mathcal{E}_g(\mathcal{Q}) = \sum_{P \in \mathcal{P}} \mu(P) \|\mathbb{E}_P[g|_P \mid \mathcal{Q}|_P]\|_{L^2(P)}^2.
$$
For $P \in \mathcal{P}$, define
$h_P = \mathbb{E}_P[g|_P \mid \mathcal{Q}|_P] - \mathbb{E}_P[g|_P]$.
Then $\mathbb{E}_P[h_P] = 0$, and hence
$$
\|\mathbb{E}_P[g|_P \mid \mathcal{Q}|_P]\|_{L^2(P)}^2 = \bigl(\mathbb{E}_P[g|_P]\bigr)^2 + \|h_P\|_{L^2(P)}^2.
$$
Summing up, we obtain
$$
\mathcal{E}_g(\mathcal{Q}) - \mathcal{E}_g(\mathcal{P}) = \sum_{P \in \mathcal{P}} \mu(P) \|h_P\|_{L^2(P)}^2.
$$
Because $f_P$ is constant on the parts of $\mathcal{Q}|_P$, we have
$$
\mathbb{E}_P \left[ (g|_P - \mathbb{E}_P[g|_P]) \cdot f_P \right] = \mathbb{E}_P[h_P f_P].
$$
By the Cauchy--Schwarz inequality,
$$
\left|\mathbb{E}_P[h_P f_P]\right|^2
\le
\mathbb{E}_P[|h_P|^2]\,\mathbb{E}_P[|f_P|^2]
\le
\|h_P\|_{L^2(P)}^2.
$$
Multiplying by $\mu(P)$ and summing over all $P \in \mathcal{P}$ proves the claim. $\square$

\smallskip

We now run the energy increment procedure. Starting from $\mathcal{P}_0$, suppose that $\mathcal{P}_s$ has been constructed. If $\mathcal{P}_s$ is $\varepsilon$-regular for $g$, we stop. Otherwise,
$$
\sum_{P \in \mathcal{P}_s} \mu(P) \sup_{f \in \mathcal{F}_P} \left| \mathbb{E}_P \left[ (g|_P - \mathbb{E}_P[g|_P]) \cdot f \right] \right| > \varepsilon.
$$
Since $\mathcal{P}_s$ is finite, we may choose witnesses $f_P \in \mathcal{F}_P$ so that
$$
\sum_{P \in \mathcal{P}_s} \mu(P) \left| \mathbb{E}_P \left[ (g|_P - \mathbb{E}_P[g|_P]) \cdot f_P \right] \right| > \varepsilon.
$$
Set $\mathcal{P}_{s+1} = \psi(\mathcal{P}_s, \{f_P\}_{P \in \mathcal{P}_s})$. By the defining property of the refinement scheme, each $f_P$ is constant on the parts of $\mathcal{P}_{s+1}|_P$. The claim then gives
$$
\mathcal{E}_g(\mathcal{P}_{s+1}) - \mathcal{E}_g(\mathcal{P}_s) \ge \sum_{P \in \mathcal{P}_s} \mu(P) \left| \mathbb{E}_P \left[ (g|_P - \mathbb{E}_P[g|_P]) \cdot f_P \right] \right|^2.
$$
Since $\sum_{P \in \mathcal{P}_s} \mu(P) = 1$, the Cauchy-Schwarz inequality yields
$$
\sum_{P \in \mathcal{P}_s} \mu(P) \left| \mathbb{E}_P \left[ (g|_P - \mathbb{E}_P[g|_P]) \cdot f_P \right] \right|^2 \ge \left( \sum_{P \in \mathcal{P}_s} \mu(P) \left| \mathbb{E}_P \left[ (g|_P - \mathbb{E}_P[g|_P]) \cdot f_P \right] \right| \right)^2 > \varepsilon^2.
$$
Thus, each non-regular step increases the energy by more than $\varepsilon^2$. Since the energy is bounded above by $1$, the procedure stops after at most $t \le \lceil \varepsilon^{-2} \rceil$ steps. Applying the growth function at each step gives the bound $\mathcal{P}_t \in \mathsf{Part}_{\Phi^{\circ t}(M_0)}(X)$, with a final cardinality at most $\Phi^{\circ \lceil \varepsilon^{-2} \rceil}(M_0)$.
\end{proof}

\section{Deducing Green's arithmetic regularity lemma}\label{sec:green}
Fix a prime $p$ and let $X=\mathbb F_p^n$, equipped with the uniform probability
measure. For $\xi,x\in\mathbb F_p^n$, set
$
\langle \xi,x\rangle=\sum_{j=1}^n \xi_jx_j\in\mathbb F_p
$
and define
$\chi_\xi(x)=\omega^{\langle \xi,x\rangle}$, where $\omega=e^{2\pi i/p}$.
Let $A\subseteq \mathbb F_p^n$, and apply
\Cref{thm:abstract-regularity} to the function $g=\mathbf 1_A$.
\medskip

If $H\le \mathbb F_p^n$ is a subspace, write $\mathcal P_H$ for the partition of
$\mathbb F_p^n$ into cosets of $H$, and write
$$
H^\perp =\{\xi\in\mathbb F_p^n:\langle \xi,h\rangle=0
\text{ for every }h\in H\}.
$$
For a coset $P=a+H$, we identify $P$ with $H$ by translation. We then take as
local test functions
$$
\mathcal F_{a+H}
=
\{0\}
\cup
\left\{
x\mapsto \overline{\chi_\xi(x-a)}:
\xi\in\mathbb F_p^n\setminus H^\perp
\right\}.
$$
The irregularity of $A$ on the coset $a+H$ is exactly the largest
non-trivial Fourier coefficient of the balanced restriction of $\mathbf 1_A$ to
that coset
$$
\operatorname{irr}_{\mathbf 1_A}(a+H)
=
\max_{\xi\in\mathbb F_p^n\setminus H^\perp}
\left|
\mathop{\mathbb E}\limits_{x\in a+H}
\left[
\left(\mathbf 1_A(x)-\mathop{\mathbb E}\limits_{y\in a+H}\mathbf 1_A(y)\right)
\overline{\chi_\xi(x-a)}
\right]
\right|,
$$
with the convention that this maximum is $0$ when $H=\{0\}$.

The refinement scheme is the usual Fourier refinement. Suppose that, for each
coset $a+H$, we have chosen a witness frequency
$\xi_{a+H}\in\mathbb F_p^n\setminus H^\perp$, where the zero witness imposes no
condition. Define
$$
H'
=
\left\{
h\in H:
\chi_{\xi_{a+H}}(h)=1
\text{ for every chosen non-zero witness }\xi_{a+H}
\right\}.
$$
Then $\mathcal P_{H'}$ refines $\mathcal P_H$. Moreover, every chosen witness is
constant on each coset of $H'$ inside its ambient coset $a+H$. Therefore, the measurability condition required in \Cref{thm:abstract-regularity} is satisfied.

It remains only to record the growth of the refinement. If
$\operatorname{codim} H=r$, then $\mathcal P_H$ has $p^r$ parts, and the above
construction adds at most one independent linear condition for each coset of
$H$. Hence,
$
\operatorname{codim} H'\le r+p^r.
$
Equivalently, if $\mathcal P_H$ has at most $M$ parts, then
$\mathcal P_{H'}$ has at most
$p^{\lceil \log_p M\rceil+M}$ parts. This gives an admissible refinement
scheme with a growth function depending only on $p$.

We now recover the usual form of Green's arithmetic regularity lemma.

\begin{cor}[Green's arithmetic regularity lemma {\cite{Green2005}}]
\label{cor:green-usual}
For every prime $p$ and every $\varepsilon>0$, there is an integer
$M=M(p,\varepsilon)$ such that the following holds. For every $n$ and every
$A\subseteq\mathbb F_p^n$, there is a subspace
$H\le\mathbb F_p^n$ of codimension at most $M$ such that all but at most an
$\varepsilon$-fraction of the cosets $a+H$ satisfy
$$
\max_{\xi\in\mathbb F_p^n\setminus H^\perp}
\left|
\mathop{\mathbb E}\limits_{x\in a+H}
\left[
\left(\mathbf 1_A(x)-\mathop{\mathbb E}\limits_{y\in a+H}\mathbf 1_A(y)\right)
\overline{\chi_\xi(x-a)}
\right]
\right|
\le \varepsilon .
$$
\end{cor}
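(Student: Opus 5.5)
The plan is to apply \Cref{thm:abstract-regularity} with the Fourier refinement scheme set up above, using a smaller regularity parameter $\delta=\varepsilon^2$, and then convert the averaged regularity guarantee into the ``all but an $\varepsilon$-fraction of cosets'' statement via Markov's inequality.

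\emph{Setup.} Take $g=\mathbf 1_A$ and let $\mathcal P_0=\mathcal P_{\mathbb F_p^n}$ be the trivial partition, so $M_0=1$. Declare the admissible partitions in $\mathsf{Part}_M(X)$ to be the coset partitions $\mathcal P_H$ with at most $M$ parts. The refinement $\psi$ sends $\mathcal P_H$, together with the chosen witnesses, to $\mathcal P_{H'}$. The discussion preceding the corollary already records the two facts we need. First, $\mathcal P_{H'}$ refines $\mathcal P_H$ and each witness is constant on the cosets of $H'$ inside its coset. Second, the growth function is $\Phi(M)=p^{\lceil\log_p M\rceil+M}$.

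One small point needs checking. The witness $x\mapsto\overline{\chi_\xi(x-a)}$ is constant on each coset $b+H'\subseteq a+H$. This holds because $\chi_\xi(h)=1$ for all $h\in H'$, and $\chi_\xi(x-a)$ differs from $\chi_\xi(x'-a)$ only by the factor $\chi_\xi(x-x')$. The zero witness is trivially constant. Hence the hypotheses of the theorem hold.

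\emph{Applying the theorem.} With parameter $\delta=\varepsilon^2$, \Cref{thm:abstract-regularity} gives some $t\le\lceil\varepsilon^{-4}\rceil$ and a subspace $H$ with $\mathcal P_H\in\mathsf{Part}_{\Phi^{\circ t}(1)}(X)$ that is $\varepsilon^2$-regular. Since $\Phi$ is increasing, we have $\Phi^{\circ t}(1)\le\Phi^{\circ\lceil\varepsilon^{-4}\rceil}(1)$. So the codimension of $H$ is at most
\[
M(p,\varepsilon):=\bigl\lceil\log_p \Phi^{\circ\lceil\varepsilon^{-4}\rceil}(1)\bigr\rceil ,
\]
which is independent of $n$ and $A$. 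If one wants to be careful, monotonicity of $\Phi$ needs a one-line check, or one can simply take the maximum over all $t\le\lceil\varepsilon^{-4}\rceil$.

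Regularity means
\[
\sum_{a+H}\mu(a+H)\,\operatorname{irr}_{\mathbf 1_A}(a+H)\le\varepsilon^2 .
\]
All cosets have equal measure, so this is the average of $\operatorname{irr}$ over cosets. The irregularity was identified above with the maximal nontrivial Fourier coefficient in the corollary; the inclusion of $0$ in $\mathcal F_{a+H}$ only affects the case $H^\perp=\mathbb F_p^n$, which is covered by the stated convention. By Markov's inequality, the fraction of cosets with irregularity greater than $\varepsilon$ is at most $\varepsilon^2/\varepsilon=\varepsilon$, which is the claimed conclusion.

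\emph{Main obstacle.} There is no serious obstacle; the only real choice is to use $\varepsilon^2$ rather than $\varepsilon$ so that the Markov step loses nothing. The remaining care is bookkeeping. We must ensure admissible partitions are genuinely coset partitions, so that the output of the theorem is of the form $\mathcal P_H$. We must also confirm that the codimension bound $r+p^r$ translates into the stated part-count bound, which the preceding text already supplies.
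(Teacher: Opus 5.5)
Your proposal is correct and follows essentially the same route as the paper. You apply \Cref{thm:abstract-regularity} from the trivial coset partition with regularity parameter $\varepsilon^2$, keeping the admissible partitions as coset partitions, and then use Markov's inequality on the averaged irregularity. Your extra bookkeeping (constancy of the witnesses on cosets of $H'$, monotonicity of $\Phi$, the explicit codimension bound and the zero-witness convention) fills in details the paper leaves implicit.
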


\begin{proof}
Start from the trivial coset partition $\mathcal P_{\mathbb F_p^n}$. The
refinement scheme described above stays inside the class of coset partitions.
Hence, \Cref{thm:abstract-regularity} yields a subspace
$H\le \mathbb F_p^n$, of codimension bounded only in terms of $p$ and $\eta$,
such that the partition $\mathcal P_H$ is $\eta$-regular for
$\mathbf 1_A$. Unwinding the definition of regularity, we obtain the following
result. For every prime $p$ and every $\eta>0$, there is an integer
$M=M(p,\eta)$ such that the following holds. For every $n$ and every
$A\subseteq \mathbb F_p^n$, there is a subspace $H\le \mathbb F_p^n$ of
codimension at most $M$ such that
$$
\mathop{\mathbb E}\limits_{a\in \mathbb F_p^n/H}
\max_{\xi\in\mathbb F_p^n\setminus H^\perp}
\left|
\mathop{\mathbb E}\limits_{x\in a+H}
\left[
\left(\mathbf 1_A(x)-\mathop{\mathbb E}\limits_{y\in a+H}\mathbf 1_A(y)\right)
\overline{\chi_\xi(x-a)}
\right]
\right|
\le \eta .
$$
Taking $\eta=\varepsilon^2$, it follows that
$$
\mathop{\mathbb E}\limits_{a\in \mathbb F_p^n/H}
\operatorname{irr}_{\mathbf 1_A}(a+H)
\le \varepsilon^2.
$$
By Markov's inequality, the fraction of cosets $a+H$ for which
$\operatorname{irr}_{\mathbf 1_A}(a+H)>\varepsilon$ is at most $\varepsilon$.
This is exactly the desired conclusion.
\end{proof}

\section{Deducing Szemerédi's graph regularity lemma}\label{sec: Szemeredi}
Let $G=(V,E)$ be a finite simple graph and write $n=|V|$. We work on
$X=V\times V$ with the uniform probability measure, and we apply
Theorem \ref{thm:abstract-regularity} to the function $g(x,y)=\1_E(x,y)$, where edges are counted as ordered pairs. Thus $g(x,y)=1$ if $xy\in E$,
and $g(x,y)=0$ otherwise. The diagonal is irrelevant for the argument. If $\mathcal V=\{V_1,\dots,V_m\}$ is a partition of $V$, let $\mathcal V^2=\{V_i\times V_j:1\le i,j\le m\}$ be the induced product partition of $V\times V$. For non-empty subsets
$A,B\subseteq V$, define $e(A,B)=\abs*{\{(a,b)\in A\times B:ab\in E\}}$, and $d(A,B)=\frac{e(A,B)}{|A||B|}$. 
On the part $V_i\times V_j$, we take as local test functions all rectangle
indicators:
$$
\mathcal F_{V_i\times V_j}
=
\{\1_{S\times T}:S\subseteq V_i,\ T\subseteq V_j\}.
$$
For this choice, the irregularity on  $V_i\times V_j$ is
$$
\operatorname{irr}_g(V_i\times V_j)
=
\frac{1}{|V_i||V_j|}
\max_{S\subseteq V_i,\ T\subseteq V_j}
\abs*{
e(S,T)-d(V_i,V_j)|S||T|
}.
$$
Indeed,
$$
\E_{V_i\times V_j}
\left[
(g-d(V_i,V_j))\1_{S\times T}
\right]
=
\frac{
e(S,T)-d(V_i,V_j)|S||T|
}{
|V_i||V_j|
}.
$$
Therefore the $\eta$-regularity condition for the product partition
$\mathcal V^2$ becomes
$$
\sum_{i,j}
\max_{S\subseteq V_i,\ T\subseteq V_j}
\abs*{
e(S,T)-d(V_i,V_j)|S||T|
}
\le
\eta n^2.
$$

The refinement scheme is the usual common refinement. Suppose that, for every
ordered pair $(i,j)$, a witness rectangle $S_{ij}\times T_{ij}\subseteq
V_i\times V_j$ has been chosen. We refine each vertex class $V_i$ by all
sets $S_{ij}$ with $1\le j\le m$, and $T_{ji}$ with $1\le j\le m$. 
Equivalently, we replace $V_i$ by the parts of the Boolean algebra generated
inside $V_i$ by these at most $2m$ subsets. The resulting vertex partition
$\mathcal V'$ refines $\mathcal V$, and the product partition
$(\mathcal V')^2$ refines $\mathcal V^2$. Moreover, every chosen rectangle
indicator $\1_{S_{ij}\times T_{ij}}$ is constant on each part of
$(\mathcal V')^2|_{V_i\times V_j}$, exactly as required by the abstract
refinement scheme.

If $\mathcal V$ has $m$ parts, then each $V_i$ is split into at most
$2^{2m}$ parts. Hence $\mathcal V'$ has at most $m2^{2m}$ parts, and the product partition has at most $m^2 2^{4m}$ parts. Thus, this is an admissible refinement scheme with a growth function depending only on the
current number of parts.

We now recover the usual form of Szemerédi's graph regularity lemma.

\begin{cor}[Szemerédi's graph regularity lemma {\cite{Szemeredi1978}}]
\label{cor:szemeredi-weighted}
For every $\eps>0$, there is an integer $M=M(\eps)$ such that every finite
graph $G=(V,E)$ admits a partition
$V=V_1\cup\cdots\cup V_m$
with $1\le m\le M$ such that
$$
\sum_{\substack{1\le i,j\le m\\ (V_i,V_j)\text{ is not }\eps\text{-regular}}}
|V_i||V_j|
\le
\eps |V|^2.
$$
\end{cor}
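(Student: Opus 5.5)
We plan to mirror the proof of \Cref{cor:green-usual}: apply \Cref{thm:abstract-regularity} with an auxiliary parameter $\eta$, then convert the averaged regularity into the pairwise statement using Markov's inequality. The starting partition is $\mathcal P_0=\{V\times V\}=\mathcal V_0^2$ with $\mathcal V_0=\{V\}$, so $M_0=1$. The refinement scheme described above only produces product partitions $(\mathcal V')^2$ whose witness rectangles become measurable. Hence \Cref{thm:abstract-regularity} gives a vertex partition $\mathcal V=\{V_1,\dots,V_m\}$ with $\mathcal V^2$ being $\eta$-regular for $g=\1_E$. The number of parts of $\mathcal V^2$ is at most $\Phi^{\circ\lceil\eta^{-2}\rceil}(1)$ with $\Phi(M)=M^22^{4M}$ (more precisely, track $m\mapsto m2^{2m}$ on vertex classes). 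This gives $m\le M(\eps)$ independent of $G$. Since $\Phi$ is increasing, we may pass from $\Phi^{\circ t}$ to $\Phi^{\circ\lceil\eta^{-2}\rceil}$.

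Unwinding the regularity condition, as already displayed, we get
$$
\sum_{i,j}\max_{S\subseteq V_i,\,T\subseteq V_j}\abs*{e(S,T)-d(V_i,V_j)|S||T|}\le\eta n^2 .
$$
Next we link this to the usual notion. Recall that $(V_i,V_j)$ fails to be $\eps$-regular exactly when there exist $S\subseteq V_i$ and $T\subseteq V_j$ with $|S|\ge\eps|V_i|$, $|T|\ge\eps|V_j|$ and $|d(S,T)-d(V_i,V_j)|>\eps$. In that case
$$
\abs*{e(S,T)-d(V_i,V_j)|S||T|}=|S||T|\,|d(S,T)-d(V_i,V_j)|>\eps^3|V_i||V_j|,
$$
so $\operatorname{irr}_g(V_i\times V_j)>\eps^3$. (The ordered-pair convention for $e(S,T)$ matches $d$, and if $i=j$ the definition is the same.)

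Now choose $\eta=\eps^4$. By Markov's inequality with weights $\mu(V_i\times V_j)=|V_i||V_j|/n^2$, the total weight of pairs whose irregularity exceeds $\eps^3$ is at most $\eta/\eps^3=\eps$. Summing over the non-regular pairs therefore gives
$$
\sum_{\text{irregular}}|V_i||V_j|\le\eps n^2,
$$
which is the claim. Finally, $m\ge1$ holds trivially.

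The main obstacle is bookkeeping rather than substance. First, the parts must have positive measure, which holds because empty Boolean atoms are discarded. Second, the bound on $m$ must be uniform in $G$, which follows because the growth function depends only on the current number of parts. The only real choice is the exponent in $\eta=\eps^4$, which comes from the factor $|S||T|\ge\eps^2|V_i||V_j|$ together with the density gap $\eps$ and the Markov threshold $\eps^3$.
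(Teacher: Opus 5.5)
Your proposal is correct and matches the paper's proof: you apply \Cref{thm:abstract-regularity} from the trivial partition with the rectangle test families and the common-refinement scheme, and you note that a non-$\eps$-regular pair has $\operatorname{irr}_g(V_i\times V_j)>\eps^3$. Taking $\eta=\eps^4$, the averaging (Markov) step then bounds the total weight of irregular pairs by $\eps|V|^2$. Your extra bookkeeping (the cruder growth bound, monotonicity of $\Phi$, discarding empty atoms) is consistent with the paper and harmless.
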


\begin{proof}
Start from the trivial partition $V_1=V$ and apply
\Cref{thm:abstract-regularity} to the induced product partition of
$V\times V$, with the rectangle test families and refinement scheme described
above. The final product partition is induced by a vertex partition
$\mathcal V=\{V_1,\dots,V_m\}$, where $m$ is bounded only in terms of
$\eta$. Unwinding the definition of abstract regularity, we obtain the
following result. For every $\eta>0$, there is an integer $M=M(\eta)$ such
that every finite graph $G=(V,E)$ admits a partition
$V=V_1\cup\cdots\cup V_m$, $1\le m\le M$, satisfying
$$
\sum_{i,j}
\max_{S\subseteq V_i,\ T\subseteq V_j}
\abs*{
e(S,T)-d(V_i,V_j)|S||T|
}
\le
\eta |V|^2.
$$
We now relate this averaged formulation to the usual pairwise notion of
regularity. Recall that a pair $(A,B)$ is $\eps$-regular if, for all
$S\subseteq A$ and $T\subseteq B$ with
$|S|\ge \eps |A|$, $|T|\ge \eps |B|$, one has
$\abs*{d(S,T)-d(A,B)}\le \eps.$ If $(V_i,V_j)$ is not $\eps$-regular, then there are $S\subseteq V_i$ and $T\subseteq V_j$ such that
$|S|\ge \eps |V_i|$, $|T|\ge \eps |V_j|$, and $\abs*{d(S,T)-d(V_i,V_j)}>\eps$. 
Equivalently,
$$
\abs*{
e(S,T)-d(V_i,V_j)|S||T|
}
>
\eps |S||T|
\ge
\eps^3 |V_i||V_j|.
$$
Let $\eta=\eps^4$. Let $\mathcal B$ be the set of ordered pairs $(i,j)$ for which $(V_i,V_j)$ is not $\eps$-regular. We have that
$$
\eps^3
\sum_{(i,j)\in\mathcal B}
|V_i||V_j|
\le
\sum_{i,j}
\max_{S\subseteq V_i,\ T\subseteq V_j}
\abs*{
e(S,T)-d(V_i,V_j)|S||T|
}
\le
\eps^4 |V|^2.
$$
Dividing by $\eps^3$ and recalling that $\eta=\eps^4$, we obtain
$$
\sum_{\substack{1\le i,j\le m\\ (V_i,V_j)\text{ is not }\eps\text{-regular}}}
|V_i||V_j|
\le
\eps |V|^2.
$$
\end{proof}

\section{Deducing the regularity lemma for Boolean functions}\label{sec: Dinur-Friedgut}
For a finite set $I$ and $p\in(0,1)$, let $\mu_p^I$ denote the
$p$-biased product measure on $\{0,1\}^I$, that is, $$\mu_p^I(\{z\})=\prod_{i\in I}p^{z_i}(1-p)^{1-z_i}.$$ When the set of coordinates is clear, we simply write $\mu_p$. 

If $f:\{0,1\}^n\to[0,1]$, we write $\mu_p(f)=\E_{x\sim\mu_p^{[n]}}f(x)$. If $T\subseteq[n]$ and $z\in\{0,1\}^T$, then $f_{T\to z}$ denotes the
restriction of $f$ obtained by fixing the coordinates in $T$ according to
$z$.

Let $r\in\mathbb N$ and $\eps>0$. A function
$f:\{0,1\}^n\to[0,1]$ is called $(r,\eps)$-\emph{quasirandom with
respect to $p$} if, for every $R\subseteq[n]$ with $|R|\le r$ and every
$w\in\{0,1\}^R$,$$\abs*{\mu_p(f_{R\to w})-\mu_p(f)}\le \eps.$$
More generally, if $T\subseteq[n]$ and $z\in\{0,1\}^T$, we say that
$f_{T\to z}$ is $(r,\eps)$-quasirandom if, for every
$R\subseteq[n]\setminus T$ with $|R|\le r$ and every
$w\in\{0,1\}^R$, $$\abs*{\mu_p(f_{T\to z,R\to w})-\mu_p(f_{T\to z})}\le \eps.$$

Fix $p\in(0,1)$ and put $X=\{0,1\}^n$, $\mu=\mu_p^{[n]}$, $g=f$. For $T\subseteq[n]$, let $\mathcal P_T$ be the partition of $\{0,1\}^n$
into the parts $P_z=\{x\in\{0,1\}^n:x_T=z\}$ with $z\in\{0,1\}^T.$ On the part $P_z$ we take as test functions all indicators of subcubes
obtained by fixing at most $r$ further coordinates:
$$
\mathcal F_{P_z}
=
\{0\}
\cup
\Bigl\{
\1_{\{x_R=w\}}\bigm|_{P_z}:
R\subseteq[n]\setminus T,\ |R|\le r,\ w\in\{0,1\}^R
\Bigr\}.
$$
Here $\1_{\{x_R=w\}}\bigm|_{P_z}$ is viewed as a function on $P_z$. For such a test function we have
$$
\E_{P_z}
\Bigl[
\bigl(g-\E_{P_z}g\bigr)\1_{\{x_R=w\}}
\Bigr]
=
\mu_p^R(w)
\Bigl(
\mu_p(f_{T\to z,R\to w})-\mu_p(f_{T\to z})
\Bigr).
$$
Indeed, after conditioning on $x_T=z$, the remaining coordinates still
have the $p$-biased product distribution. Therefore
$$
\operatorname{irr}_g(P_z)
=
\sup_{\substack{R\subseteq[n]\setminus T\\ |R|\le r}}
\ \sup_{w\in\{0,1\}^R}
\mu_p^R(w)
\abs*{
\mu_p(f_{T\to z,R\to w})-\mu_p(f_{T\to z})
}.
$$

The refinement scheme is also immediate. Suppose that, for each part
$P_z$, a witness has been chosen. If the witness is non-zero, it is of
the form $\1_{\{x_{R_z}=w_z\}}$ with $R_z\subseteq[n]\setminus T$ and
$|R_z|\le r$. Define
$$
T'
=
T\cup\bigcup_{z\in\{0,1\}^T}R_z,
$$
where zero witnesses contribute nothing. Then $\mathcal P_{T'}$ refines
$\mathcal P_T$, and every chosen witness is constant on every part of
$\mathcal P_{T'}$ contained in the corresponding part of $\mathcal P_T$.
Moreover, $|T'|\le |T|+r2^{|T|}$. If one parametrizes admissible partitions by their number of parts, this
gives the growth bound
$\Phi_r(M)=\left\lceil M2^{rM}\right\rceil,
$
because a partition fixing at most $\lfloor\log_2 M\rfloor$ coordinates
has at most $M$ parts, and after refinement it has at most
$M2^{rM}$ parts.

\begin{cor}[Regularity lemma for Boolean functions]\label{cor:boolean-regularity}
For every $r\in\mathbb N$ and every $\eps,\zeta,\delta>0$, there is an
integer $J=J(r,\eps,\zeta,\delta)$ such that the following holds. Let
$\zeta<p<\frac12-\zeta$, and let
$f:\{0,1\}^n\to\{0,1\}$. Then there is a set of coordinates
$T\subseteq[n]$ with $|T|\le J$ such that
$$
\Pr_{z\sim\mu_p^T}
\left[
f_{T\to z}\text{ is not }(r,\eps)\text{-quasirandom with respect to }p
\right]
\le \delta.
$$
\end{cor}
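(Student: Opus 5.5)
Start from the trivial partition $\mathcal P_\emptyset=\{\{0,1\}^n\}$, which corresponds to $T=\emptyset$. Apply \Cref{thm:abstract-regularity} to $g=f$ with the subcube test families $\mathcal F_{P_z}$ and the coordinate-fixing refinement scheme described above, using a regularity parameter $\eta>0$ chosen later. The refinement always stays inside the class of partitions $\mathcal P_T$. Hence the theorem produces a set $T\subseteq[n]$ and the partition $\mathcal P_T$, which is $\eta$-regular for $f$. The number of steps is at most $\lceil\eta^{-2}\rceil$, and each step obeys $|T'|\le |T|+r2^{|T|}$. So $|T|$ is bounded by a tower-type function $J$ of $r$ and $\eta$ alone, independent of $n$ and $p$. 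Unwinding the regularity condition, and using that $\mu(P_z)=\mu_p^T(z)$, we get
$$
\E_{z\sim\mu_p^T}\operatorname{irr}_f(P_z)\le\eta .
$$

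Next, convert this into the pointwise quasirandomness statement. Suppose $f_{T\to z}$ is not $(r,\eps)$-quasirandom. Then there are $R\subseteq[n]\setminus T$ with $|R|\le r$ and $w\in\{0,1\}^R$ such that $\abs*{\mu_p(f_{T\to z,R\to w})-\mu_p(f_{T\to z})}>\eps$. By the displayed formula for $\operatorname{irr}_g(P_z)$ this gives
$$
\operatorname{irr}_f(P_z)>\mu_p^R(w)\,\eps .
$$
Because $\zeta<p<\frac12-\zeta$, both $p$ and $1-p$ are at least $\zeta$. So $\mu_p^R(w)\ge\zeta^{|R|}\ge\zeta^r$ (with $\zeta<1$), and therefore $\operatorname{irr}_f(P_z)>\eps\zeta^r$ on every such bad part.

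Finally, apply Markov's inequality. The $\mu_p^T$-probability of a bad $z$ is at most $\eta/(\eps\zeta^r)$. Choosing $\eta=\delta\eps\zeta^r$ makes this at most $\delta$. Then $J=J(r,\eps,\zeta,\delta)$ is the iterated bound $\Phi_r^{\circ\lceil\eta^{-2}\rceil}$ translated to coordinate counts, for example via the recursion $j\mapsto j+r2^j$ starting from $0$.

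The only real care point is the lower bound on $\mu_p^R(w)$. This is where the hypothesis that $p$ is bounded away from $0$ and $1$ enters. The upper bound $p<\frac12-\zeta$ is not needed beyond ensuring $1-p\ge\zeta$. One should also check that $J$ does not depend on $p$, which holds because the growth of $|T|$ is purely combinatorial.
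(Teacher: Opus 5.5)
Your proposal is correct and essentially matches the paper's proof. You apply \Cref{thm:abstract-regularity} to the coordinate partitions $\mathcal P_T$ with $\eta=\delta\zeta^r\eps$, use $\mu_p^R(w)\ge\zeta^r$ to show that every bad $z$ has $\operatorname{irr}_f(P_z)>\zeta^r\eps$, and conclude by Markov's inequality; your recursion $j\mapsto j+r2^j$ on coordinate counts just records the same bound on $|T|$ more explicitly than the paper's $\Phi_r$.
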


\begin{proof}
Apply \cref{thm:abstract-regularity} to the partitions
$\mathcal P_T$ and to the test families described above, with regularity
parameter $\eta=\delta\,\zeta^r\eps$. This gives a coordinate set $T$ of size at most a constant $J=J(r,\eps,\zeta,\delta)$, independent of $n$, such that $\sum_{z\in\{0,1\}^T}
\mu_p^T(z)\operatorname{irr}_f(P_z)
\le\eta$. 

Let $B= \{z\in\{0,1\}^T: f_{T\to z}\text{ is not }(r,\eps)\text{-quasirandom}\}$. We claim that every $z\in B$ satisfies
$\operatorname{irr}_f(P_z)>\zeta^r\eps$. Indeed, if $z\in B$, then there exist
$R\subseteq[n]\setminus T$ with $|R|\le r$ and
$w\in\{0,1\}^R$ such that
$$\abs*{
\mu_p(f_{T\to z,R\to w})-\mu_p(f_{T\to z})
}
>
\eps.$$ 
Since $\zeta<p<\frac12-\zeta$, every part of the $p$-biased measure on
$R$ has measure at least $\zeta^{|R|}\ge \zeta^r$. Hence
$$
\operatorname{irr}_f(P_z)
\ge
\mu_p^R(w)
\abs*{
\mu_p(f_{T\to z,R\to w})-\mu_p(f_{T\to z})
}
>
\zeta^r\eps.
$$

Therefore
$$
\delta\zeta^r\eps
=
\eta
\ge
\sum_{z\in\{0,1\}^T}
\mu_p^T(z)\operatorname{irr}_f(P_z)
\ge
\sum_{z\in B}
\mu_p^T(z)\operatorname{irr}_f(P_z)
>
\mu_p^T(B)\zeta^r\eps.
$$
Cancelling $\zeta^r\eps$ gives $\mu_p^T(B)\le\delta$, which is the desired
conclusion.
\end{proof}
\printbibliography
\appendix
\section{Deducing the Bollobás--Nikiforov abstract regularity lemma}
\label{sec: comparisons}
We first recall the setup.  Let $(X,\mathcal A,\mu,\mathcal S)$ be an
SR-system in the sense of Bollobás and Nikiforov: $(X,\mathcal A,\mu)$ is a
normalized measure triple and $\mathcal S\subseteq\mathcal A$ is a semiring. Further, assume that there is an integer $r\ge 1$ such that, whenever
$S,T\in\mathcal S$, the difference $S\setminus T$ can be written as a
disjoint union of at most $r$ members of $\mathcal S$.  If $A,V\in\mathcal A$
and $\mu(V)>0$, write
$
d(A,V)=\frac{\mu(A\cap V)}{\mu(V)}.
$

Following Bollobás and Nikiforov, if $V\in\mathcal S$ has positive measure,
we say that $A$ is $\eps$-\emph{regular in} $V$ if
$$
\abs*{d(A,U)-d(A,V)}<\eps,
$$
for every $U\in\mathcal S$ with $U\subseteq V$ and
$\mu(U)>\eps\mu(V)$.  If $\mathcal P$ is a finite partition of $X$ into
members of $\mathcal S$, we say that $A$ is $\eps$-\emph{regular in}
$\mathcal P$ if
$$
\sum_{\substack{P\in\mathcal P\\ A\text{ is not }\eps\text{-regular in }P}}
\mu(P)<\eps .
$$

We shall also use the notion of a bounding family of partitions.  Let
$\Pi(\mathcal S)$ denote the set of finite partitions of $X$ into members of
$\mathcal S$.  A family $\Phi\subseteq \Pi(\mathcal S)$ \emph{bounds}
$\Pi(\mathcal S)$ if there is an increasing function
$\varphi:\mathbb N\to\mathbb N$ such that, for every
$\mathcal P\in\Pi(\mathcal S)$, there exists
$\mathcal Q\in\Phi$ refining $\mathcal P$ with
$|\mathcal Q|\le \varphi(|\mathcal P|)$.

We now recover the abstract regularity lemma of Bollobás and Nikiforov.

\begin{cor}[Bollobás--Nikiforov abstract regularity lemma]
Let $(X,\mathcal A,\mu,\mathcal S)$ be an SR-system, and let
$\Phi\subseteq\Pi(\mathcal S)$ be a family of partitions bounding
$\Pi(\mathcal S)$ with rate $\varphi$.  Let
$\mathcal L\subseteq\mathcal A$ be a finite family of measurable sets, let
$\mathcal P\in\Pi(\mathcal S)$, and let $\eps>0$.  Then there is an integer
$q=q(\eps,|\mathcal L|,|\mathcal P|)$ and a partition
$\mathcal Q\in\Phi$ such that
$\mathcal Q$ refines $\mathcal P$, $|\mathcal Q|\le q,
$ and every $A\in\mathcal L$ is $\eps$-regular in $\mathcal Q$.
\end{cor}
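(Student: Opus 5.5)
We plan to apply \Cref{thm:abstract-regularity} to a single vector-valued-in-spirit energy, handling all $A\in\mathcal L$ at once. Concretely, write $\mathcal L=\{A_1,\dots,A_k\}$ and work on the product space $X'=X\times[k]$ with measure $\mu'=\mu\otimes(\text{uniform on }[k])$. Set $g(x,i)=\1_{A_i}(x)$. Partitions of $X$ lift to partitions of $X'$ by $P\mapsto P\times\{i\}$. The energy of $g$ on the lifted partition is then the average over $i$ of the energies of $\1_{A_i}$. Admissible partitions of $X'$ are the lifts of members of $\Phi$ refining $\mathcal P$, together with the lift of $\mathcal P$ itself as the starting partition $\mathcal P_0$. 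On a part $P\times\{i\}$ the test family is
$$\mathcal F_{P\times\{i\}}=\{0\}\cup\{\1_{U}:U\in\mathcal S,\ U\subseteq P\}.$$
With this choice, $\operatorname{irr}_g(P\times\{i\})=\sup_U d(U,P)\,\abs*{d(A_i,U)-d(A_i,P)}$. If $A_i$ is not $\eps$-regular in $P$, some $U$ witnesses $\operatorname{irr}>\eps^2$.

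The refinement step is as follows. Given witnesses $U_{P,i}\subseteq P$, refine each $P$ by the Boolean algebra generated by the at most $k$ sets $U_{P,i}$. By the semiring property, each atom of that algebra inside $P$ is a finite intersection of $U$'s and complements within $P$. Iterating $S\setminus T$ decompositions, each atom is a disjoint union of at most some $N(r,k)$ members of $\mathcal S$ (roughly $r^{k}$ pieces per atom, $2^k$ atoms). This gives a partition $\mathcal P'\in\Pi(\mathcal S)$ with $|\mathcal P'|\le |\mathcal P|\,2^k r^{k}$. Next we apply the bounding property to get $\mathcal Q\in\Phi$ refining $\mathcal P'$ with $|\mathcal Q|\le\varphi(|\mathcal P|2^kr^k)$. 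Since $\mathcal Q$ refines $\mathcal P'$, every witness $\1_{U_{P,i}}$ is constant on the parts of $\mathcal Q$ inside $P$, which is the measurability hypothesis. This gives the growth function $\Phi(M)=\varphi(M2^kr^k)$.

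Running \Cref{thm:abstract-regularity} with parameter $\eta$ gives $\mathcal Q\in\Phi$ refining $\mathcal P$ with $|\mathcal Q|$ bounded in terms of $\eta,k,|\mathcal P|$. It also gives $\frac1k\sum_i\sum_{Q}\mu(Q)\operatorname{irr}_{A_i}(Q)\le\eta$. For each fixed $i$ this yields $\sum_Q\mu(Q)\operatorname{irr}_{A_i}(Q)\le k\eta$. By Markov, the measure of parts where $A_i$ is not $\eps$-regular is at most $k\eta/\eps^2$. Choosing $\eta=\eps^3/(2k)$ makes this less than $\eps$, as required. If the sole iteration stops at $t=0$, then $\mathcal P_0=\mathcal P$ may not lie in $\Phi$. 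In that case we apply the bounding property once more at the end: passing to $\mathcal Q\in\Phi$ refining $\mathcal P_t$ keeps regularity up to a small loss. A cleaner fix is to start from a $\Phi$-refinement of $\mathcal P$, which costs only $\varphi(|\mathcal P|)$.

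The main obstacle is the refinement step. We must show that the atoms generated by the witnesses decompose into boundedly many semiring members, with the bound depending only on $r$ and $k$. We must also check that passing to the bounding family preserves the measurability of witnesses. The latter is automatic since refinements only help. The former needs a careful induction: $P\cap U\setminus V$ is handled via $U\setminus V$ being a union of at most $r$ members of $\mathcal S$, and intersections of semiring members stay in $\mathcal S$.
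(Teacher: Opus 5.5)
Your proof is correct, but it handles the simultaneous treatment of $\mathcal L$ differently from the paper.

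The paper does not quote \Cref{thm:abstract-regularity} as stated. It re-runs the energy-increment proof with the summed energy $\sum_{A\in\mathcal L}\|\E[\1_A\mid\mathcal R]\|_2^2\in[0,\ell]$, and uses Jensen with weights $\mu(P)/\ell$ to get increments of at least $\eta^2/\ell$. This gives at most $\lceil \ell^2\eta^{-2}\rceil$ steps with $\eta=\eps^3$.

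Your space $X\times[k]$ with $\mu\otimes\mathrm{uniform}$ turns exactly those weights into a probability measure, so the single-function theorem applies as a black box. The price is twofold. Admissible partitions must be lifts of a single partition of $X$, so each $P$ is refined by all $k$ witnesses at once, as you do. And since you control an average, a factor $k$ returns in the Markov step. This forces $\eta\approx\eps^3/k$ and gives the same $O(k^2\eps^{-6})$ iteration count. Your extra factor $2$ in $\eta$ is unnecessary, because the Markov step is already strict.

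The refinement step coincides with the paper's, which splits $P$ one witness at a time into at most $(r+1)^\ell$ semiring pieces. Your atom count, done exactly, also gives $\sum_{I\subseteq[k]}r^{k-|I|}=(r+1)^k$, so your $2^kr^k$ is merely cruder.

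There are two small points to fix. First, for the case $t=0$, use your second fix (start from a $\Phi$-refinement of $\mathcal P$), which is precisely what the paper does. The first fix is not a small loss in general: refining an $\eta$-regular partition can multiply the irregularity sum by a factor comparable to the number of new sub-parts, so $\eta$ would have to be shrunk in terms of $\varphi(|\mathcal P|)$. Second, a lift of $\mathcal R$ has $k|\mathcal R|$ parts, so the growth function on $X\times[k]$ should be about $M\mapsto k\,\varphi(M2^kr^k)$. It should also not be called $\Phi$, which already names the bounding family.
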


\begin{proof}
Let $\ell=|\mathcal L|$.  If $\ell=0$ there is nothing to prove, so assume
$\ell\ge 1$.  Since $\Phi$ bounds $\Pi(\mathcal S)$, choose first a partition
$\mathcal P_0\in\Phi$ refining $\mathcal P$ with $|\mathcal P_0|\le \varphi(|\mathcal P|)$. We apply the proof of \Cref{thm:abstract-regularity} simultaneously to the
functions $\1_A$, $A\in\mathcal L$.  For a part $P$ of a partition
$\mathcal R\in\Phi$, take as local test functions
$$
\mathcal F_P
=
\{0\}\cup
\{\1_U|_P: U\in\mathcal S,\ U\subseteq P\}.
$$
For $A\in\mathcal L$ and $P\in\mathcal R$, the corresponding local
irregularity is
$$
\operatorname{irr}_A(P)
=
\sup_{\substack{U\in\mathcal S\\ U\subseteq P}}
\left|
\E_P\left[
\bigl(\1_A-d(A,P)\bigr)\1_U
\right]
\right|
=
\sup_{\substack{U\in\mathcal S\\ U\subseteq P}}
\frac{\mu(U)}{\mu(P)}
\abs*{d(A,U)-d(A,P)}.
$$

The only point to check is that the refinement scheme required by
\Cref{thm:abstract-regularity} is available inside the family $\Phi$.  Suppose
that $\mathcal R\in\Phi$ has $m$ parts.  For each pair
$(A,P)\in\mathcal L\times\mathcal R$, choose a witness
$U_{A,P}\in\mathcal S$ with $U_{A,P}\subseteq P$.  Inside a fixed part $P$,
we refine by all the sets $U_{A,P}$, $A\in\mathcal L$.  Since
$\mathcal S$ is $r$-built, refining one member of $\mathcal S$ by one
semiring subset splits it into at most $r+1$ members of $\mathcal S$.
Iterating this for the $\ell$ witnesses inside $P$, we obtain a partition of
$P$ into at most $(r+1)^\ell$ members of $\mathcal S$ on which all the
indicators $\1_{U_{A,P}}$ are constant.  Doing this for every
$P\in\mathcal R$ gives a partition $\mathcal R^\ast\in\Pi(\mathcal S)$ with
$
|\mathcal R^\ast|\le m(r+1)^\ell $.
Now use the bounding property of $\Phi$ to choose
$\mathcal R'\in\Phi$ refining $\mathcal R^\ast$ with
$$
|\mathcal R'|\le \varphi\bigl(m(r+1)^\ell\bigr).
$$
Thus we have an admissible refinement scheme with growth function
$$
\Psi(m)=\varphi\bigl(m(r+1)^\ell\bigr).
$$
Run the energy-increment argument with the total energy
$$
\mathcal E_{\mathcal L}(\mathcal R)
=
\sum_{A\in\mathcal L}
\left\|
\E[\1_A\mid \mathcal R]
\right\|_{L^2(\mu)}^2.
$$
This energy lies between $0$ and $\ell$.  If
$$
\sum_{A\in\mathcal L}
\sum_{P\in\mathcal R}
\mu(P)\operatorname{irr}_A(P)>\eta,
$$
then choose witnesses $U_{A,P}$ and refine as above.  The same computation as
in the proof of \Cref{thm:abstract-regularity} gives
$$
\mathcal E_{\mathcal L}(\mathcal R')
-
\mathcal E_{\mathcal L}(\mathcal R)
\ge
\sum_{A\in\mathcal L}
\sum_{P\in\mathcal R}
\mu(P)
\left|
\E_P\left[
\bigl(\1_A-d(A,P)\bigr)\1_{U_{A,P}}
\right]
\right|^2 .
$$
Since the weights $\mu(P)$ sum to $1$ for each fixed $A$, and there are
$\ell$ choices of $A$, Jensen's inequality gives
$$
\sum_{A\in\mathcal L}
\sum_{P\in\mathcal R}
\mu(P)
\left|
\E_P\left[
\bigl(\1_A-d(A,P)\bigr)\1_{U_{A,P}}
\right]
\right|^2
\ge
\frac{\eta^2}{\ell}.
$$
Thus every non-regular step increases the total energy by at least
$\eta^2/\ell$.  Since the total energy is at most $\ell$, the procedure
stops after at most
$
\left\lceil \frac{\ell^2}{\eta^2}\right\rceil
$
steps.  We obtain a partition $\mathcal Q\in\Phi$ refining $\mathcal P_0$,
hence refining $\mathcal P$, with
$
|\mathcal Q|
\le
\Psi^{\circ \left\lceil \frac{\ell^2}{\eta^2}\right\rceil}\bigl(\varphi(|\mathcal P|)\bigr),
$
and satisfying
$$
\sum_{A\in\mathcal L}
\sum_{Q\in\mathcal Q}
\mu(Q)\operatorname{irr}_A(Q)
\le \eta .
$$
It remains to translate this averaged test-function regularity into the
Bollobás--Nikiforov notion.  Fix $A\in\mathcal L$, and let
$\mathcal B_A$ be the set of parts $Q\in\mathcal Q$ in which $A$ is not
$\eps$-regular.  For each $Q\in\mathcal B_A$, there is a set
$U\in\mathcal S$, $U\subseteq Q$, such that
$\mu(U)>\eps\mu(Q)$ and $\abs*{d(A,U)-d(A,Q)}>\eps$. 
Therefore
$$
\operatorname{irr}_A(Q)
\ge
\frac{\mu(U)}{\mu(Q)}
\abs*{d(A,U)-d(A,Q)}
>
\eps^2 .
$$
Hence
$$
\eps^2
\sum_{Q\in\mathcal B_A}\mu(Q)
<
\sum_{Q\in\mathcal B_A}
\mu(Q)\operatorname{irr}_A(Q)
\le
\sum_{Q\in\mathcal Q}
\mu(Q)\operatorname{irr}_A(Q)
\le
\eta .
$$
Taking $\eta=\eps^3$ gives
$
\sum_{Q\in\mathcal B_A}\mu(Q)<\eps$. Thus every $A\in\mathcal L$ is $\eps$-regular in $\mathcal Q$. Finally, with $\eta=\eps^3$, one may take
$
q(\eps,\ell,|\mathcal P|)
=
\Psi^{\circ \left\lceil \ell^2\eps^{-6}\right\rceil}
\bigl(\varphi(|\mathcal P|)\bigr)$, and 
$\Psi(m)=\varphi\bigl(m(r+1)^\ell\bigr)$. 
\end{proof}

\section{Deducing Tao's probabilistic regularity lemma}
\label{sec: tao}
We shall use the following harmless variant of \Cref{thm:abstract-regularity}:
the function $g:X\to[0,1]$ may be replaced by a real-valued function
$X\in L^2(\mu)$ with $\|X\|_2\le 1$.  The proof is unchanged, since the
only point at which boundedness is used is the energy bound
$$0\le \|\E[X\mid \mathcal P]\|_2^2\le \|X\|_2^2\le 1.$$

We identify a finite sub-$\sigma$-algebra with its finite partition into
parts.  If $\mathcal B$ is a finite sub-$\sigma$-algebra, write
$\operatorname{complex}(\mathcal B)$ for the least number of events needed
to generate it.  Thus $\mathcal B$ has at most
$2^{\operatorname{complex}(\mathcal B)}$ parts.  If
$(\mathcal B_i)_{i\in I}$ is a finite family of finite sub-$\sigma$-algebras,
write
$
\bigvee_{i\in I}\mathcal B_i
$
for their common refinement. We first isolate the single-level regularization statement supplied by
\Cref{thm:abstract-regularity}.

\begin{lem}
\label{lem:tao-product-regularization}
Let $(\Omega,\mathcal B_{\max},\mu)$ be a probability space, let
$(\mathcal B_{i,\max})_{i\in I}$ be a finite family of sub-$\sigma$-algebras
of $\mathcal B_{\max}$, and let $X\in L^2(\mathcal B_{\max})$ satisfy
$\|X\|_2\le 1$.  Let $(\mathcal B_i)_{i\in I}$ be finite sub-$\sigma$-algebras
with
$
\mathcal B_i\subseteq \mathcal B_{i,\max}$ and $\operatorname{complex}(\mathcal B_i)\le M
$
for all $i\in I$.  Then, for every $\delta>0$, there are finite
sub-$\sigma$-algebras
$
\mathcal B_i\subseteq \mathcal B_i'\subseteq \mathcal B_{i,\max}
$
such that
$$
\left|
\E\left[
\left(
X-\E\left[X\mid \bigvee_{i\in I}\mathcal B_i'\right]
\right)
\prod_{i\in I}1_{A_i}
\right]
\right|
\le \delta
$$
for every choice of events $A_i\in\mathcal B_{i,\max}$. Moreover, the complexities of the $\mathcal B_i'$ are bounded in terms of
$M,\delta$, and $|I|$ only.
\end{lem}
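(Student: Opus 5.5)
We apply \Cref{thm:abstract-regularity}, in the $L^2$ variant just described, on the probability space $(\Omega,\mathcal B_{\max},\mu)$ with $g$ replaced by $X$. The admissible partitions are those of the form $\bigvee_{i\in I}\mathcal B_i''$, where each $\mathcal B_i''$ is a finite sub-$\sigma$-algebra with $\mathcal B_i\subseteq\mathcal B_i''\subseteq\mathcal B_{i,\max}$. We parametrize them by the maximal complexity $K=\max_i\operatorname{complex}(\mathcal B_i'')$; such a partition has at most $2^{|I|K}$ parts. The starting partition is $\mathcal P_0=\bigvee_i\mathcal B_i$, with $K\le M$. On an atom $P$ of $\bigvee_i\mathcal B_i''$ we take as test family
\[
\mathcal F_P=\Bigl\{\prod_{i\in I}1_{A_i}\big|_P : A_i\in\mathcal B_{i,\max}\Bigr\}.
\]
These functions are bounded by $1$ in absolute value.

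The refinement scheme adds one generator per factor and per atom. Suppose that on each atom $P$ a witness $\prod_i 1_{A_{i,P}}$ has been chosen. We set
\[
\mathcal B_i'''=\mathcal B_i''\vee\sigma\bigl(A_{i,P}:P\text{ an atom}\bigr).
\]
This keeps $\mathcal B_i'''\subseteq\mathcal B_{i,\max}$. The new complexity is at most $K+2^{|I|K}$, so the growth function depends only on $|I|$. Every witness is a product of indicators of sets in $\mathcal B_i'''$, so it is measurable with respect to $\bigvee_i\mathcal B_i'''$. In particular it is constant on the atoms of the new partition lying inside $P$, which is the measurability hypothesis of the theorem. \Cref{thm:abstract-regularity} then gives, after at most $\lceil\eta^{-2}\rceil$ steps, algebras $\mathcal B_i'$ of complexity bounded in terms of $M$, $|I|$ and $\eta$, such that
\[
\sum_{P}\mu(P)\operatorname{irr}_X(P)\le\eta .
\]

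It remains to pass from this local, averaged regularity to the global statement. Write $\mathcal B'=\bigvee_i\mathcal B_i'$ and fix events $A_i\in\mathcal B_{i,\max}$. On each atom $P$ of $\mathcal B'$ we have
\[
(X-\E[X\mid\mathcal B'])|_P=X|_P-\E_P[X],
\]
and $\prod_i 1_{A_i}$ restricted to $P$ lies in $\mathcal F_P$. Decomposing the expectation over atoms and using the triangle inequality gives
\[
\Bigl|\E\Bigl[(X-\E[X\mid\mathcal B'])\prod_i 1_{A_i}\Bigr]\Bigr|\le\sum_P\mu(P)\operatorname{irr}_X(P)\le\eta .
\]
Taking $\eta=\delta$ completes the argument.

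The main point to get right is the bookkeeping. The refinement has to stay inside the admissible class, meaning products of finite sub-$\sigma$-algebras of the $\mathcal B_{i,\max}$ that contain the $\mathcal B_i$. The growth has to be expressed through complexities rather than raw part counts: a bound on part counts alone does not control each $\operatorname{complex}(\mathcal B_i')$. This is handled by tracking $K$ directly, with $K\mapsto K+2^{|I|K}$. A second detail is that $X$ is real-valued and unbounded. The proof of the theorem only uses Cauchy--Schwarz and the energy bound $\|\E[X\mid\mathcal P]\|_2^2\le 1$, and both survive in the $L^2$ variant.
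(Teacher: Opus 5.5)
Your proposal is correct and follows essentially the same route as the paper. It uses the same product-indicator test families on the atoms of $\bigvee_i\mathcal B_i$, the same refinement by adjoining the witness events $A_i(P)$ to each factor with complexity growth $K\mapsto K+2^{|I|K}$, and the same triangle-inequality step over atoms to pass from the averaged local bound to the global one with $\eta=\delta$.
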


\begin{proof}
Let
$
\mathcal P=\bigvee_{i\in I}\mathcal B_i
$
be the joint part partition. For a part $P\in\mathcal P$, take as local
test functions
$
\mathcal F_P
=
\left\{
\left.
\prod_{i\in I}1_{A_i}
\right|_P
\,:\,
A_i\in\mathcal B_{i,\max}
\right\}.
$
These functions are bounded by $1$.

We now describe the refinement scheme.  Suppose that, for each part
$P\in\mathcal P$, a witness
$
f_P=
\left.
\prod_{i\in I}1_{A_i(P)}
\right|_P
$
has been chosen, with $A_i(P)\in\mathcal B_{i,\max}$.  Refine each
$\mathcal B_i$ by adjoining all the events $A_i(P)$, as $P$ ranges over
the parts of $\mathcal P$.  Denote the resulting algebra by
$\mathcal B_i^+$.  Then
$
\mathcal B_i\subseteq \mathcal B_i^+\subseteq \mathcal B_{i,\max}.
$
Moreover, every $f_P$ is measurable with respect to
$\bigvee_{i\in I}\mathcal B_i^+$ inside $P$, so the measurability condition
in \Cref{thm:abstract-regularity} is satisfied.

If $\operatorname{complex}(\mathcal B_i)\le M$ for every $i$, then
$\bigvee_{i\in I}\mathcal B_i$ has at most $2^{|I|M}$ parts.  Hence each
$\mathcal B_i$ is refined by adjoining at most $2^{|I|M}$ new events, and
so
$
\operatorname{complex}(\mathcal B_i^+)
\le
M+2^{|I|M}.
$
Thus we have an admissible refinement scheme with a growth function depending
only on $|I|$.

Applying \Cref{thm:abstract-regularity} with parameter $\delta$, starting
from $\bigvee_i\mathcal B_i$, gives finite refinements
$\mathcal B_i'\supseteq \mathcal B_i$ such that, if
$
\mathcal P'=\bigvee_{i\in I}\mathcal B_i',
$
then
$
\sum_{P\in\mathcal P'}
\mu(P)
\sup_{A_i\in\mathcal B_{i,\max}}
\left|
\E_P\left[
\left(X-\E_P X\right)
\prod_{i\in I}1_{A_i}
\right]
\right|
\le \delta.
$
Now fix arbitrary $A_i\in\mathcal B_{i,\max}$.  Since
$\E[X\mid\mathcal P']$ is equal to $\E_PX$ on each part $P\in\mathcal P'$,
we have
$$
\begin{aligned}
&
\left|
\E\left[
\left(
X-\E[X\mid\mathcal P']
\right)
\prod_{i\in I}1_{A_i}
\right]
\right| \le
\sum_{P\in\mathcal P'}
\mu(P)
\left|
\E_P\left[
\left(X-\E_PX\right)
\prod_{i\in I}1_{A_i}
\right]
\right|
\le \delta.
\end{aligned}
$$
This proves the desired product-test regularity.  The complexity bound follows
by iterating the growth function for at most $\lceil\delta^{-2}\rceil$ steps.
\end{proof}
We now recover Tao's theorem.

\begin{cor}
\label{cor:tao-probabilistic}
Let $(\Omega,\mathcal B_{\max},\mu)$ be a probability space, let
$(\mathcal B_{i,\max})_{i\in I}$ be a finite family of sub-$\sigma$-algebras
of $\mathcal B_{\max}$, and let $X\in L^2(\mathcal B_{\max})$ satisfy
$\|X\|_2\le 1$.  Let $\eps>0$, let $m\ge 0$, and let
$F:\mathbb R_+\to\mathbb R_+$ be increasing.  Then there are finite
sub-$\sigma$-algebras
$\mathcal B_i\subseteq\mathcal B_i'\subseteq\mathcal B_{i,\max}$, $i\in I$, 
and a number $M=O_{\eps,F,m,|I|}(1)$, such that
$M\ge m$, $\operatorname{complex}(\mathcal B_i)\le M$
for every $i\in I$,
$
\left\|
\E\left[X\mid \bigvee_{i\in I}\mathcal B_i'\right]
-
\E\left[X\mid \bigvee_{i\in I}\mathcal B_i\right]
\right\|_2
\le \eps,
$
and, for every choice of events $A_i\in\mathcal B_{i,\max}$,
$$
\left|
\E\left[
\left(
X-\E\left[X\mid \bigvee_{i\in I}\mathcal B_i'\right]
\right)
\prod_{i\in I}1_{A_i}
\right]
\right|
\le \frac{1}{F(M)}.
$$
\end{cor}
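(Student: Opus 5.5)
We will prove Corollary~\ref{cor:tao-probabilistic} by the standard outer energy-increment iteration, using Lemma~\ref{lem:tao-product-regularization} as the inner step at each stage. Start with $\mathcal B_i^{(0)}$ the trivial $\sigma$-algebra for every $i$, and set $M_0=m$. At stage $k$ we have $\mathcal B_i^{(k)}\subseteq\mathcal B_{i,\max}$ with $\operatorname{complex}(\mathcal B_i^{(k)})\le M_k$.

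At stage $k$ we apply Lemma~\ref{lem:tao-product-regularization} with $M=M_k$ and $\delta=1/F(M_k)$. This gives $\mathcal B_i^{(k)}\subseteq\mathcal B_i^{(k+1)}\subseteq\mathcal B_{i,\max}$ such that $X-\E[X\mid\bigvee_i\mathcal B_i^{(k+1)}]$ has all product-test correlations at most $1/F(M_k)$. The complexities of the $\mathcal B_i^{(k+1)}$ are bounded by some $M_{k+1}=G(M_k)$, where $G$ depends only on $F$ and $|I|$. We may increase $G$ so that it is increasing and satisfies $G(M)\ge M$.

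We then check whether
$$\Bigl\|\E\bigl[X\mid \textstyle\bigvee_i\mathcal B_i^{(k+1)}\bigr]-\E\bigl[X\mid \textstyle\bigvee_i\mathcal B_i^{(k)}\bigr]\Bigr\|_2\le\eps.$$
If it holds, we stop and output $\mathcal B_i=\mathcal B_i^{(k)}$, $\mathcal B_i'=\mathcal B_i^{(k+1)}$ and $M=M_k$. Then $M\ge m$, the complexity bound holds by construction, the $L^2$-closeness is the stopping condition, and the product-test bound is $\le 1/F(M_k)=1/F(M)$, as required.

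If it fails, we use orthogonality. Because the joins are nested, conditional expectations onto nested $\sigma$-algebras satisfy Pythagoras:
$$\|\E[X\mid\mathcal P^{(k+1)}]\|_2^2=\|\E[X\mid\mathcal P^{(k)}]\|_2^2+\|\E[X\mid\mathcal P^{(k+1)}]-\E[X\mid\mathcal P^{(k)}]\|_2^2,$$
where $\mathcal P^{(k)}=\bigvee_i\mathcal B_i^{(k)}$. So the energy increases by more than $\eps^2$. This is exactly the identity in the Claim inside the proof of Theorem~\ref{thm:abstract-regularity}. Since the energy lies in $[0,\|X\|_2^2]\subseteq[0,1]$, the process stops after at most $\lceil\eps^{-2}\rceil$ stages. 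Consequently $M\le G^{\circ\lceil\eps^{-2}\rceil}(m)$, which is $O_{\eps,F,m,|I|}(1)$.

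The main obstacle is bookkeeping rather than analysis. Lemma~\ref{lem:tao-product-regularization} must give a complexity bound that is a genuine function of $(M,\delta,|I|)$ alone, monotone in $M$ and not depending on $X$. With $\delta=1/F(M)$, this is what makes $G$ depend only on $F$ and $|I|$. The lemma's proof provides this: it iterates $M\mapsto M+2^{|I|M}$ at most $\lceil\delta^{-2}\rceil$ times, which is monotone in both $M$ and $\delta^{-1}$, so we may take this explicit expression as the definition of $G$.

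A second point to handle is that $F$ maps into $\mathbb R_+$, so $F(M)$ may not be an integer. The lemma only needs $\delta>0$, so this causes no difficulty. If $F(M)$ could be $0$ we would replace $F$ by $\max(F,1)$, which only strengthens the conclusion wherever it matters.
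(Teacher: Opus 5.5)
Your proposal is correct and follows the paper's proof essentially step for step. It uses the same coarse/fine iteration from trivial algebras, applies Lemma~\ref{lem:tao-product-regularization} with $\delta=1/F(M_k)$, uses the same stopping rule and output $(\mathcal B_i,\mathcal B_i',M)=(\mathcal B_i^{(k)},\mathcal B_i^{(k+1)},M_k)$, the same Pythagorean energy increment of more than $\eps^2$, and the same bound $M\le G^{\circ\lceil\eps^{-2}\rceil}(m)$; the only differences are cosmetic (you track $M_k$ as the iterated bound rather than $\max\{m,\max_i\operatorname{complex}(\mathcal B_i^{(s)})\}$, and you add a harmless safeguard against $F$ vanishing).
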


\begin{proof}
We run a coarse/fine energy-increment procedure.  Begin for every $i\in I$, with the trivial
algebras
$\mathcal B_i^{(0)}=\{\emptyset,\Omega\}$.
Suppose that the current coarse algebras
$(\mathcal B_i^{(s)})_{i\in I}$ have been constructed, and put
$
M_s=
\max\left\{
m,\max_{i\in I}\operatorname{complex}(\mathcal B_i^{(s)})
\right\}.
$
Apply \Cref{lem:tao-product-regularization} to the coarse algebras
$(\mathcal B_i^{(s)})_{i\in I}$ with
$
\delta_s=\frac{1}{F(M_s)}.
$
This gives refinements
$
\mathcal B_i^{(s)}\subseteq \mathcal B_i^{\mathrm{fine}}
\subseteq \mathcal B_{i,\max}
$
such that, for every $A_i\in\mathcal B_{i,\max}$,
$$
\left|
\E\left[
\left(
X-\E\left[X\mid \bigvee_{i\in I}\mathcal B_i^{\mathrm{fine}}\right]
\right)
\prod_{i\in I}1_{A_i}
\right]
\right|
\le \frac{1}{F(M_s)}.
$$

If
$
\left\|
\E\left[X\mid \bigvee_{i\in I}\mathcal B_i^{\mathrm{fine}}\right]
-
\E\left[X\mid \bigvee_{i\in I}\mathcal B_i^{(s)}\right]
\right\|_2
\le \eps,
$
we stop and set
$
\mathcal B_i=\mathcal B_i^{(s)},
\mathcal B_i'=\mathcal B_i^{\mathrm{fine}},
M=M_s.
$
The required conclusions then hold.

Otherwise, set
$
\mathcal B_i^{(s+1)}=\mathcal B_i^{\mathrm{fine}}
$
and continue.  We claim that this can happen at most $\lceil\eps^{-2}\rceil$
times.  Indeed, write
$
\mathcal C_s=\bigvee_{i\in I}\mathcal B_i^{(s)}$ and $
\mathcal C_s^{\mathrm{fine}}
=
\bigvee_{i\in I}\mathcal B_i^{\mathrm{fine}}.
$
Since $\mathcal C_s^{\mathrm{fine}}$ refines $\mathcal C_s$, orthogonality
of conditional expectation gives
$
\left\|\E[X\mid\mathcal C_s^{\mathrm{fine}}]\right\|_2^2
-
\left\|\E[X\mid\mathcal C_s]\right\|_2^2
=
\left\|
\E[X\mid\mathcal C_s^{\mathrm{fine}}]
-
\E[X\mid\mathcal C_s]
\right\|_2^2.
$
Thus every non-stopping step increases the energy by more than $\eps^2$.
But the energy is always at most $\|X\|_2^2\le 1$, so there are at most
$\lceil\eps^{-2}\rceil$ non-stopping steps.

It remains only to bound the final value of $M$.  By
\Cref{lem:tao-product-regularization}, the complexity of the fine algebras
constructed from coarse complexity at most $M_s$ is bounded by a quantity
depending only on $M_s$, $F(M_s)$, and $|I|$.  Hence there is an increasing
function $\Theta=\Theta_{F,|I|}$ such that
$
M_{s+1}\le \Theta(M_s)
$
whenever the procedure does not stop.  Since the procedure has at most
$\lceil\eps^{-2}\rceil$ non-stopping steps and $M_0=m$, the final value
satisfies
$
M\le \Theta^{\circ \lceil\eps^{-2}\rceil}(m).
$
Therefore $M=O_{\eps,F,m,|I|}(1)$, as required.
\end{proof}
\end{document}